\numberwithin{equation}{section}
\newtheorem{thm}{Theorem}[section]
\newtheorem{lemma}[thm]{Lemma}
\newtheorem{prop}[thm]{Proposition}
\theoremstyle{remark}
\newtheorem{remark}[thm]{\textbf{Remark}}
\theoremstyle{definition}
\newcommand{\Br}{\mathrm{Br}}
\newcommand{\Spec}{\mathrm{Spec}\,}
\newcommand{\red}{\mathrm{red}}
\newcommand{\wh}[1]{\widehat{#1}}
\newcommand{\ov}[1]{\overline{#1}}
\newcommand{\set}[1]{\{\,{#1}\,\}}
\begin{document}

\title{\textbf{Local-global principle for quadratic forms over fraction fields of two-dimensional henselian domains}}
\author{Yong HU\footnote{Math\'{e}matiques, B\^{a}timent 425, Universit\'{e} Paris-Sud, 91405,
Orsay Cedex,  France,\ \ \ e-mail: yong.hu2@math.u-psud.fr}}

\maketitle

\begin{abstract}
Let $R$ be a 2-dimensional normal excellent henselian local domain
in which $2$ is invertible and let $L$ and $k$ be respectively its
fraction field and residue field. Let $\Omega_R$ be the set of rank
1 discrete valuations of $L$ corresponding to codimension 1 points
of regular proper models of $\Spec R$.  We prove that a quadratic
form $q$ over $L$ satisfies the local-global principle with respect
to $\Omega_R$ in the following two cases: (1) $q$ has rank 3 or 4;
(2) $q$ has rank $\ge 5$ and $R=A[\![y]\!]$, where $A$ is a complete
discrete valuation ring with a not too restrictive condition on the
residue field $k$, which is satisfied when $k$ is $C_1$.
\end{abstract}


\section{Statements of results}

Let $R$ be a 2-dimensional excellent henselian local domain and let
$L$ and $k$ be respectively its fraction field and residue field.
Assume that the characteristic of $k$ is not $2$.

Colliot-Th\'el\`ene, Ojanguren and Parimala \cite{CTOP} proved that
any quadratic form of rank at least $5$ over $L$ is isotropic when
$k$ is separably closed, and that the local-global principle with
respect to all discrete valuations (of rank 1) on $L$ holds for
quadratic forms of rank $3$ or $4$ when $k$ is separably closed or
finite. For the first result, the special case where
$R=\mathbb{C}[\![x,\,y]\!]$ was proven earlier in \cite{CDLR} using
the Weierstra{\ss} preparation theorem. On the other hand, Jaworski
\cite{Ja} proved that if $k$ is an algebraically closed field, then
quadratic forms of any rank over $L=k(\!(x\,,\,y)\!)$  satisfy the
local-global principle with respect to all discrete valuations on
$L$.

In the case where $k$ is finite, however, whether the local-global
principle holds for quadratic forms of rank $\ge 5$ is left an open
question. In this paper, we give an affirmative answer to this
question in the case where $R=A[\![y]\!]$ is the ring of formal
power series in one variable over a complete discrete valuation ring
$A$. Also, we prove that the result of Colliot-Th\'el\`ene,
Ojanguren and Parimala about the local-global principle for
quadratic forms of rank 3 or 4 is still valid without the assumption
that $k$ is separably closed or finite.

\

The more precise statements are the following.

\begin{thm}\label{thm1p1}
Let $R$ be a $2$-dimensional normal excellent henselian local domain
in which $2$ is invertible. Let $L$ and $k$ be respectively the
fraction field and the residue field of $R$. For any regular
integral scheme $\mathcal{M}$ equipped with a proper birational
morphism $\mathcal{M}\to\Spec R$, let $\Omega_{\mathcal{M}}$ denote
the set of  rank $1$ discrete valuations of $L$ that correspond to
codimension $1$ points of $\mathcal{M}$. Let $\Omega_R$ be the union
of all $\Omega_{\mathcal{M}}$.

Then the local-global principle with respect to $\Omega_R$ holds for
quadratic forms of rank $3$ or $4$ over $L$. Namely, if a quadratic
form of rank $3$ or $4$ over $L$ has a nontrivial zero over the
$w$-adic completion $L_w$ for every $w\in\Omega_R$, then it has a
nontrivial zero over $L$.
\end{thm}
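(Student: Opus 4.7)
The plan is to translate isotropy of quadratic forms of rank $3$ or $4$ into the splitting of $2$-torsion Brauer classes (over $L$, or over a quadratic extension $K/L$), and then to establish a local--global principle for $\Br(-)[2]$ with respect to the divisorial valuations coming from proper regular models of $\Spec R$.

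For rank $3$, writing $q\simeq\langle a,b,c\rangle$, the form $q_F$ is isotropic over any field $F\supset L$ iff the quaternion algebra $D:=(-ac,\,-bc)_F$ is split. Local isotropy of $q$ at every $w\in\Omega_R$ therefore amounts to saying that $[D]\in\Br(L)[2]$ lies in the kernel of $\Br(L)[2]\to\prod_{w\in\Omega_R}\Br(L_w)[2]$, and it suffices to show this kernel is trivial. For rank $4$, let $\delta\in L^\times/L^{\times 2}$ be the discriminant. If $\delta$ is trivial, $q$ is similar to a $2$-fold Pfister form and the same reduction applies. If $\delta$ is nontrivial, set $K:=L(\sqrt\delta)$; by the classical theory of the Clifford invariant, $q$ is isotropic over $L$ iff the even Clifford algebra $C_0(q)$, a quaternion algebra over $K$, is split in $\Br(K)$. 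Since $R$ is henselian, the integral closure $S$ of $R$ in $K$ is again a $2$-dimensional normal excellent henselian local domain, with fraction field $K$, so an analogous set $\Omega_S$ is defined. A case analysis on whether $\delta$ is a square in each $L_w$ shows that local isotropy of $q$ at $w$ translates to local splitting of $[C_0(q)]$ at every valuation of $K$ above $w$, so one is reduced to the same Brauer-group local--global principle, this time applied to the pair $(S,K)$.

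The main technical input, and the principal obstacle, is therefore a local--global principle for $\Br(-)[2]$ with respect to $\Omega_R$, valid for every $2$-dimensional normal excellent henselian local domain $R$ in which $2$ is invertible. The approach is cohomological. Using purity of the Gersten complex for $H^2(-,\mu_2)$ on a regular proper model $\mathcal{M}\to\Spec R$, a class in $\Br(L)[2]=H^2(L,\mu_2)$ trivial at every $w\in\Omega_{\mathcal M}$ extends to an unramified class in $H^2(\mathcal M,\mu_2)$. By proper base change together with the henselian property of $R$, this is controlled by the cohomology of the closed fibre $\mathcal{M}_k$, a projective curve over $k$; explicit analysis of $H^2(\mathcal{M}_k,\mu_2)$, combined with local triviality along every irreducible component and a residue argument on a suitable resolution, then forces the class to vanish. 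A secondary point, needed in the rank $4$ case, is to verify that every $w'\in\Omega_S$ lies over some $w\in\Omega_R$, i.e.\ that divisorial valuations on $K$ arising from regular proper models of $\Spec S$ are compatible with those of $\Spec R$ under the finite normalization $S\to R$.
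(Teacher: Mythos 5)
Your high-level skeleton coincides with the paper's: reduce isotropy of rank $3$ and rank $4$ forms to the splitting of a quaternion class, pass to the quadratic extension $K=L(\sqrt{\delta})$ (with its integral closure $S$ of $R$) when the discriminant is nontrivial, show that divisorial valuations on $K$ lie over valuations in $\Omega_R$, and establish a local--global principle for $2$-torsion Brauer classes on a regular proper model. However, your sketch of the central step is too thin, and what it omits is exactly the point the paper singles out as crucial.

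Concretely, after showing that the class is unramified and hence lies in $\Br(\mathcal{M})\cong \Br(\mathcal{M}_k)\cong \Br((\mathcal{M}_k)_{\red})$, you still face the problem that $(\mathcal{M}_k)_{\red}$ is in general a singular, reducible projective curve over an arbitrary field $k$, and you must show a Brauer class on it which vanishes over the function field of each component (and in the completions) is zero. Saying ``explicit analysis of $H^2(\mathcal{M}_k,\mu_2)$ ... forces the class to vanish'' is not a proof; for a singular curve the map to the normalization is far from an isomorphism on Brauer groups. The paper's proof rests on a specific injectivity result of Colliot-Th\'el\`ene--Ojanguren--Parimala (\cite[Prop.\;1.14]{CTOP}): if $f:Z\to X_{\red}$ is the normalization and $D\subseteq X_{\red}$ the conductor subscheme, then $\Br(X_{\red})\to \Br(Z)\times\Br(D)$ is injective. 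This statement is the key input, and the observation that it holds over an \emph{arbitrary} field $k$ (rather than separably closed or finite) is precisely what lets the theorem shed the old hypothesis on $k$. One must then separately kill the component in $\Br(Z)$ (using local triviality along the generic points of the components, together with $\Br(T)\hookrightarrow \Br(k(T))$ for regular $T$) and the component in $\Br(D)$ (by checking vanishing at each closed point via a regular curve through that point). Your sketch does not indicate any of this, and there is no obvious substitute using $\mu_2$-cohomology alone: $H^2(\mathcal{M}_k,\mu_2)$ picks up a ${\rm Pic}/2$ contribution via Kummer theory that makes it no easier to handle directly.

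Similarly, your ``secondary point'' --- that every $w'\in\Omega_S$ lies over some $w\in\Omega_R$ --- is not a verification but a lemma requiring proof. The paper proves it as Lemma~\ref{lemma3p1}, and the case where $w'$ is centered at the closed point needs the nontrivial blow-up Lemma~\ref{lemma2p1} (a valuation whose residue field has the expected transcendence degree is eventually realized as a codimension-one point after finitely many point blow-ups). Without an argument of this type, you cannot conclude that local triviality over $\Omega_R$ gives local triviality over $\Omega_S$.

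In short: the strategy is right and matches the paper; the two genuine gaps are (i) the absence of the normalization-plus-conductor injectivity argument (or an equivalent) for the Brauer group of the singular reduced special fiber over arbitrary $k$, and (ii) the unproved comparison of $\Omega_S$ with $\Omega_R$.
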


\begin{thm}\label{thm1p2}
Let $A$ be a complete discrete valuation ring in which $2$ is
invertible, and let $K$ and $k$ be respectively its fraction field
and residue field. Let $R=A[\![y]\!]$ and $L=\mathrm{Frac}(R)$ the
fraction field of $R$. Define $\Omega_R$ as in Thm.$\;\ref{thm1p1}$.

Assume that the residue field $k$ has the following property:

$(*)$ for every finite field extension $k'/k$, every quadratic form
of rank $\ge 3$ over $k'$  is isotropic.

Then the local-global principle with respect to discrete valuations
in $\Omega_R$ holds for quadratic forms of rank $\ge 5$ over $L$.
\end{thm}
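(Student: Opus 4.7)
My plan is to combine a residue analysis on a regular proper model of $\Spec R$ with the arithmetic hypothesis $(*)$ on $k$, ultimately reducing the rank $\ge 5$ problem to a form of rank $\le 4$ to which Theorem~\ref{thm1p1} applies.

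Concretely, let $q=\langle a_1,\dots,a_n\rangle$ be a form of rank $n\ge 5$ over $L$ with $a_i\in R$, and assume $q$ is isotropic over $L_w$ for every $w\in\Omega_R$. By resolution of singularities for excellent two-dimensional schemes, I would choose a regular proper model $\mathcal{X}\to\Spec R$ on which the divisor of $2\,a_1\cdots a_n\,\pi_A\, y$, with $\pi_A$ a uniformizer of $A$, has strict normal crossings support $D$; further blow-ups make each $a_i$ of multiplicity $0$ or $1$ along every component of $D$. For each irreducible component $C$ of $D$ with associated valuation $v_C\in\Omega_R$, decompose $q\cong q_1^C\perp t_C\,q_2^C$ over $L_{v_C}$, with $t_C$ a uniformizer and $q_1^C, q_2^C$ unit-diagonal; local isotropy then forces at least one of the two residue forms to be isotropic over the residue field $\kappa(v_C)$.

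The residue field $\kappa(v_C)$ is the function field of a curve over a finite extension $k_C$ of $k$, and $(*)$ forces every form of rank $\ge 3$ over $k_C$ to be isotropic. This arithmetic input, together with Tsen/Springer-type computations over $\kappa(v_C)$ and a careful global bookkeeping over all components of $D$, should let one peel off hyperbolic planes and Pfister-like factors from $q$ until a rank $3$ or $4$ ``core'' form $q'$ remains, still locally isotropic at every $w\in\Omega_R$ and with $q$ isotropic over $L$ iff $q'$ is. Theorem~\ref{thm1p1} then finishes the argument.

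The main difficulty I anticipate is the global bookkeeping step: ensuring that the local simplifications at different components of $D$ are mutually compatible, so that they assemble into a genuine reduction of $q$ over $L$. Compatibility has to be verified at the closed points of $\mathcal{X}$ where several components of $D$ meet, and the residue field at such a point is a finite extension of $k$, where $(*)$ is directly applicable; conceptually, one is proving a Hasse principle for an obstruction class of $q$ (essentially a degree-three cohomological symbol) on the two-dimensional henselian base, controlled via its residues along components of $D$ by the local isotropy hypothesis, and at the intersection points by $(*)$. The fact that $R=A[\![y]\!]$ (rather than a general two-dimensional henselian base) is what I expect to make the exact cohomological computation tractable, by providing two natural parameters $\pi_A,y$ and thus a simple stratification of $\Omega_R$ into ``horizontal'' and ``vertical'' valuations to which the residue machinery can be applied independently.
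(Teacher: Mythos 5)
Your proposal takes a fundamentally different route from the paper, and the step you yourself flag as the ``main difficulty''---the global bookkeeping that would let you peel off hyperbolic planes from $q$ to leave a rank $\le 4$ core over $L$---is a genuine gap, not a routine verification. From local isotropy over each $L_w$ you learn only that, for each component $C$ of $D$, one of the two residue forms of $q$ at $v_C$ is isotropic over $\kappa(v_C)$. This does not produce a hyperbolic plane inside $q$ over $L$: the local Witt decompositions $q\cong q_1^C\perp t_C\,q_2^C$ are not compatible with one another, and there is no approximation or patching result over a general two-dimensional henselian base that glues them. Note also that if your reduction worked, it would prove the theorem for an arbitrary normal excellent henselian $R$ with $k$ having property $(*)$; but the paper explicitly restricts to $R=A[\![y]\!]$, and leaves the general case open. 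Your closing remark about the two parameters $\pi_A,y$ stratifying $\Omega_R$ gestures at the right input but does not turn it into an argument.

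The paper uses $R=A[\![y]\!]$ in a completely different and essential way. After clearing squares, Weierstra{\ss} preparation normalizes each coefficient to $a_i=\lambda_i\,x^{n_i}\,P_i$ with $\lambda_i$ a unit representative, $n_i\in\{0,1\}$, and $P_i$ a distinguished polynomial, so that $q$ is actually a form over the subfield $F=K(y)\subset L$, the function field of $\mathbb{P}^1_K$. Proposition~\ref{prop1p4} (proved by a case analysis of the center of $v\in\Omega_A$ on $\mathbb{P}^1_A$, using Lemma~\ref{lemma2p1} and Springer's lemma, with property $(*)$ entering only through residue forms over finite extensions of $k$ or $K$) then transfers the local hypothesis from $L_w$, $w\in\Omega_R$, to isotropy over $F_v$ for every $v\in\Omega_A$. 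The final step invokes the Colliot-Th\'el\`ene--Parimala--Suresh local-global principle over $F$ (resting on Harbater--Hartmann--Krashen patching), which is precisely the global patching input your proposal is missing. Theorem~\ref{thm1p1} is not used at all in this proof, and the argument never needs to extract a low-rank subform of $q$ over $L$.
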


Recall that a field $k$ is called a $C_i$ field if every homogeneous
polynomial of degree $d$ in $n>d^i$ variables has a nontrivial zero
over $k$. A finite field extension of a $C_i$ field is again a $C_i$
field. Clearly, a $C_1$ field $k$ has property $(*)$. So as typical
examples to which Thm.$\;$\ref{thm1p2} applies, we may take
$R=\mathbb{F}[\![x\,,\,y]\!]$ where $\mathbb{F}$ is a finite field
of characteristic $>2$, or $R=\mathcal{O}_K[\![y]\!]$ where
$\mathcal{O}_K$ is the ring of integers of a $p$-adic number field
$K$ ($p$ is an odd prime).

\begin{remark}\label{remark1p3}
Note that property $(*)$ implies the following:

$(**)$ for every finite field extension $K'/K$, every quadratic form
of rank $\ge 5$ over $K'$  is isotropic.

Indeed, the integral closure $A'$ of $A$ in $K'$ is a complete
discrete valuation ring and is finite over $A$ (cf. \cite[p.28,
$\S$II.2, Prop.$\;$3]{Ser1}). The residue field $k'$ of $A'$ is a
finite extension of $k$. Any quadratic form $q$ over $K'$ is
isometric to a form $q_1\bot\, t.q_2$, where $t$ is a uniformizer of
$A'$ and the coefficients of $q_1,\,q_2$ are all units in $A'$. When
$q$ has rank $\ge 5$ and $k$ has property $(*)$, a standard argument
using Springer's lemma (cf. Lemma$\;$\ref{lemma4p1}) shows that $q$
is isotropic over $K'$.
\end{remark}

\

Let $A\,,\,k\,,\,K$ and so on be as in Thm.$\;\ref{thm1p2}$. Let
$x\in A$ be a uniformizer of $A$ and $F=K(y)$ the function field of
$\mathbb{P}^1_K$. For any regular integral scheme $\mathcal{P}$
equipped with a proper flat morphism $\mathcal{P}\to \Spec A$ with
generic fiber $\mathcal{P}\times_AK\cong \mathbb{P}^1_K$, let
$\Omega_{\mathcal{P}}$ denote the set of rank $1$ discrete
valuations of $F$ that correspond to codimension $1$ points of
$\mathcal{P}$. Let $\Omega_A$ be the union of all
$\Omega_{\mathcal{P}}$. Then we have the following proposition.

\begin{prop}\label{prop1p4}
With notation as above, let $q/F=\, \langle a_1\,,\,\dotsc,\, a_r
\rangle$ be a nonsingular diagonal quadratic form of rank $r\ge 5$
with $a_i\in A[y]$. Let $\Sigma\subseteq A$ be a fixed set of
representatives of $k^*$ in $A$. Assume that
\begin{equation}\label{eq1p1}
a_i=\lambda_i.\,x^{n_i}.\,P_i\,, \end{equation}where $\lambda_i\in
\Sigma\,,\,n_i\in\set{0\,,\,1}$ and $P_i$ is a distinguished
polynomial of degree $m_i$ in $A[y]\,($meaning that $P_i$ is a monic
polynomial in $A[y]$ whose reduction mod $x$ is $y^{m_i}\in
k[y]\,)$.

If for every $w\in\Omega_R$, $q$ is isotropic over the completion
$L_w$ of $L$ with respect to $w$, then for every $v\in\Omega_A$, $q$
is isotropic over the completed field $F_v$.
\end{prop}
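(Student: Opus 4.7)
The plan is to enumerate the valuations $v\in\Omega_A$ by the center of $v$ on the standard model $\mathbb{P}^1_A$ and, in each case, either identify a $w\in\Omega_R$ with $L_w\cong F_v$ (so that the hypothesis transfers directly) or reduce isotropy of $q$ over $F_v$, by iterated Springer's lemma, to isotropy over $k$ of one of the four ``parity subforms''
\[
q_{n,\epsilon}\;:=\;\langle\,\lambda_i\,:\,n_i=n,\ m_i\equiv\epsilon\!\!\pmod 2\,\rangle,\qquad (n,\epsilon)\in\{0,1\}^2.
\]
I will close by showing that the hypothesis applied to the single valuation $w=v_x\in\Omega_R$ (corresponding to the prime $(x)\subset R$) already forces one of the $q_{n,\epsilon}$ to be isotropic over $k$.

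\medskip\noindent
\textbf{Direct transfer.} Two classes of $v$ admit a direct identification $F_v\cong L_w$ for some $w\in\Omega_R$: (i) $v=v_\pi$ for $\pi\in A[y]$ irreducible and distinguished---by Weierstra{\ss} preparation, $\pi$ remains prime in $R=A[\![y]\!]$, and the associated $w=v_\pi\in\Omega_R$ satisfies $F_v\cong L_w\cong(K[y]/(\pi))(\!(\pi)\!)$; and (ii) $v$ centered at the closed point $(x,y)\in\mathbb{P}^1_A$. For (ii), any regular proper model $\mathcal{P}\to\Spec A$ realizing $v$ reduces, after taking a common resolution with $\mathbb{P}^1_A$, to a blow-up sequence starting from $(x,y)\in\mathbb{P}^1_A$. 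The canonical identification $R=\wh{\mathcal{O}_{\mathbb{P}^1_A,(x,y)}}$ lets us perform the same blow-up sequence on $\Spec R$, producing a regular proper model $\mathcal{M}\to\Spec R$ with a corresponding codim-$1$ point; $F_v$ and $L_w$ are then fraction fields of complete DVRs with the same residue field and uniformizer, hence they coincide.

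\medskip\noindent
\textbf{Reduction to parity subforms.} The distinguished form of the $P_i$ handles the remaining cases. (a) For $v=v_\pi$ with $\pi$ irreducible in $K[y]$ but not distinguished, Gauss's lemma in the UFD $A[y]$ forces $\pi\nmid P_i$ (otherwise the reduction $\bar{\pi}\in k[y]$ would divide $y^{m_i}$ and $\pi$ would itself be distinguished), so $v(P_i)=0$; all coefficients of $q$ are units in the valuation ring of $F_v$, and Springer's lemma reduces the question to isotropy of a rank-$r\ge 5$ form over the finite extension $\kappa(\xi)=K[y]/(\pi)$ of $K$, which holds by Remark~\ref{remark1p3}. (b) For $v$ centered at a closed point $P\neq(x,y)$ of the special fibre $\mathbb{P}^1_k$, the congruence $P_i\equiv y^{m_i}\!\!\pmod x$ shows each $P_i$ is a unit in $\mathcal{O}_{\mathbb{P}^1_A,P}$ (its residue in $\kappa(P)$ is a nonzero power of the ``$y$-coordinate'' of $P$); absorbing units via Hensel's lemma (here $\mathrm{char}\,k\ne 2$), iterated Springer at $v$ shows that iso of $q$ over $F_v$ is implied by iso over $k$ of one of the $q_{n,\epsilon}$. (c) The remaining codim-$1$ cases $v=v_x$ and $v=v_\infty$ submit to iterated Springer at the two natural parameters ($x$ then $y$, respectively $1/y$ then $x$), together with $P_i\equiv y^{m_i}\!\!\pmod x$, yielding the same reduction to one of the $q_{n,\epsilon}$.

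\medskip\noindent
\textbf{Closing and main obstacle.} Applying iterated Springer at $x$ then at $y$ to $q$ over $L_{v_x}=k(\!(y)\!)(\!(x)\!)$ shows that iso of $q$ over $L_{v_x}$ is equivalent to iso over $k$ of one of the $q_{n,\epsilon}$. Thus the hypothesis already gives iso of some $q_{n,\epsilon}$ over $k$, and by the reductions in (a)--(c) this yields iso of $q$ over every $F_v$. The principal technical obstacle is case~(ii): the ``lifting'' of divisorial valuations on blow-ups of $\mathbb{P}^1_A$ centered over $(x,y)$ to divisorial valuations on blow-ups of $\Spec R$, and the canonical equality of the resulting completions $F_v$ and $L_w$. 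The bookkeeping in case~(b) for valuations arising from long blow-up chains is similarly intricate, though routine; the key input throughout is the specific congruence $P_i\equiv y^{m_i}\!\!\pmod x$, without which the residue forms at different $v$'s would fail to match.
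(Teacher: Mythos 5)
Your approach tracks the paper's proof closely: both split into cases according to the center $P$ of $v$ on $\mathbb{P}^1_A$, both use Springer's lemma together with the distinguished form of the $P_i$ to pass to residue forms, and both use a blow-up argument (the paper's Lemma~\ref{lemma2p1}, your ``common resolution'') for valuations centered at the closed point $(x,y)$. Your closing observation --- that the hypothesis applied only at $v_x\in\Omega_R$ already forces one of the four parity subforms $q_{n,\epsilon}$ to be isotropic over $k$ --- is precisely the content of the paper's cases (2) and (3), and it is correct. One stylistic difference worth noting: for $v$ centered at a closed point $P\in\mathbb{P}^1_k$ with $P\notin\{0,\infty\}$, the paper does not invoke the hypothesis at all. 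Since $q_1\perp q_2$ has rank $r\ge 5$, one of $q_1,q_2$ has rank $\ge 3$, and its residue form has coefficients in the finite extension $\kappa(P)/k$, so property $(*)$ alone gives isotropy. Similarly, when $v=v_\pi$ with $\pi\nmid P_i$ for all $i$, the paper and you both use only $(**)$. Routing these cases through the $v_x$ step as you do is also valid, just not necessary.

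There is, however, a genuine gap in your coverage: divisorial valuations $v$ centered at the closed point $\infty\in\mathbb{P}^1_k$, i.e.\ those arising from sequences of blow-ups over $\infty$ in the special fiber (so $v(x)>0$ and $v(y)<0$ simultaneously). Your case (b) assumes that each $P_i$ is a unit in $\mathcal{O}_{\mathbb{P}^1_A,P}$; this fails at $P=\infty$, where the monic distinguished polynomial $P_i$ has a pole of order $m_i$. Your case (c) is stated only for the codimension-one valuation $v_\infty$ of $K(y)$, and your recipe ``Springer at $1/y$ then at $x$'' presupposes $v(x)=0$ so that $x$-adic Springer can be applied to the residue field $\kappa(v)=K$; this does not apply when $\kappa(v)$ is a function field of a curve over $k$. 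The paper's case (3) handles all $v$ with $v(y)<0$, $v(x)\ge 0$ uniformly: since $v(1/y)>0$, the unit factor $1+(1/y)\rho_i$ is a square in $F_v$, so over $F_v$ one has $q\cong q'=\mu_1\perp y.\mu_2\perp x.\mu_3\perp xy.\mu_4$ with each $\mu_j$ defined over $A$; by the $v_x$ step some $\mu_j$ is isotropic over $k$, hence over $K$ by Hensel's lemma, hence over $F\subseteq F_v$, giving isotropy of $q'$ and therefore of $q$. An argument of this shape is needed to close the $\infty_s$ case, and it is missing from your proposal.
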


As we shall see at the end of the paper, Thm.$\;$\ref{thm1p2}
follows by combining the above proposition with a theorem of
Colliot-Th\'el\`ene, Parimala and Suresh \cite{CTPaSu} on quadratic
forms over $F=K(y)$, whose proof builds upon earlier work of
Harbater, Hartmann and Krashen \cite{HHK}.

\section{Valuations coming from blow-ups}

\begin{lemma}\label{lemma2p1}
Let $A$ be an excellent local domain with residue field $k$ and
$\mathcal{X}$ an integral $A$-scheme of finite type. Let $F$ be the
function field of $\mathcal{X}$ and $v$ a rank $1$ discrete
valuation of $F$ with valuation ring $\mathcal{O}_v$. Assume that
$v$ is centered on $\mathcal{X}$ at a point $x$ in the closed fiber
$X:=\mathcal{X}\times_Ak$ and that the residue field $\kappa(v)$ of
$\mathcal{O}_v$ has transcendence degree
$\mathrm{trdeg}_k\kappa(v)=\dim \mathcal{X}-1$ over $k$. Let
$\mathcal{Y}=\Spec\mathcal{O}_v$ and $y\in \mathcal{Y}$ the closed
point of $\mathcal{Y}$. Let $f:\mathcal{Y}\to \mathcal{X}$ be the
natural morphism induced by the inclusion
$\mathscr{O}_{\mathcal{X},\,x}\subseteq \mathcal{O}_v$. Define
schemes $\mathcal{X}_n\,,n\in\mathbb{N}$ and morphisms $f_n:
\mathcal{Y}\to \mathcal{X}_n\,,\,n\in \mathbb{N}$ as follows:

Set $\mathcal{X}_0=\mathcal{X}$ and $f_0=f$. When $f_i:
\mathcal{Y}\to \mathcal{X}_i$ is already defined, let
$\mathcal{X}_{i+1}\to \mathcal{X}_i$ be the blow-up of
$\mathcal{X}_i$ along the closure of $x_i:=f_i(y)$ and let
$f_{i+1}:\mathcal{Y}\to \mathcal{X}_{i+1}$ be the induced morphism.

Then for some large enough $n$, the morphism $f_n: \mathcal{Y}\to
\mathcal{X}_n$ induces an isomorphism
$\mathscr{O}_{\mathcal{X}_n\,,\,x_n}\cong \mathcal{O}_v$.
\end{lemma}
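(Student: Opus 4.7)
The plan is to reduce the problem to iterated quadratic transforms of local rings and to invoke Zariski--Abhyankar local uniformization for divisorial valuations. Set $R_n=\mathscr{O}_{\mathcal{X}_n,x_n}$ with maximal ideal $\mathfrak{m}_n$. Since $x_n$ is the closed point of $\Spec R_n$, the closure $\overline{\{x_n\}}$ meets $\Spec R_n$ only at $x_n$, so over this affine neighborhood the blow-up $\mathcal{X}_{n+1}\to\mathcal{X}_n$ coincides with the blow-up of $\Spec R_n$ along $\mathfrak{m}_n$, and $R_{n+1}$ is the local ring at the center of $v$ on this blow-up. We thus obtain a chain of excellent local subrings $R_0\subseteq R_1\subseteq\cdots\subseteq\mathcal{O}_v$ of $F$, all dominated by $\mathcal{O}_v$, and the task is to prove $R_N=\mathcal{O}_v$ for some $N$.

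Next I would check that $v$ is divisorial with respect to $R_0$. Using the dimension formula for the excellent catenary ring $R_0$, the hypothesis $\mathrm{trdeg}_k\kappa(v)=\dim\mathcal{X}-1$ translates to
\[
\mathrm{rk}(v)+\mathrm{trdeg}_{\kappa(x)}\kappa(v)=1+(\dim R_0-1)=\dim R_0,
\]
so $v$ is an Abhyankar valuation on $R_0$ with integer value group and maximal residue transcendence degree. Any such $v$ is realized by a prime divisor on some proper normal modification of $\Spec R_0$, and passing to an affine chart yields a finitely generated $R_0$-subalgebra $B\subseteq\mathcal{O}_v$ together with a prime $\mathfrak{p}\subseteq B$ such that $\mathcal{O}_v=B_\mathfrak{p}$; in particular $\mathcal{O}_v$ is essentially of finite type over $R_0$.

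The technical core is the principalization statement: for $a,b\in R_0$ with $v(a)\ge v(b)$, one has $a/b\in R_n$ for all $n\gg 0$, equivalently $(a,b)R_n=(b)R_n$. I would follow the classical Zariski--Abhyankar pattern of tracking the nonnegative integer $v(a)-v(b)$ together with the $v$-values on a chosen set of generators of $\mathfrak{m}_n$; the hypothesis that $v$ is rank $1$ Abhyankar with integer value group ensures that each blow-up step either already achieves principality or strictly decreases a well-chosen numerical invariant. Excellence of $R_0$ is inherited by every $R_n$ since blow-ups of excellent schemes are excellent, and this is essential in excluding pathological behavior from the descent. I expect this step to be the main obstacle.

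Once the principalization is in hand, the conclusion is formal. Write $B=R_0[g_1,\dots,g_m]$ and express each $g_i\in\mathcal{O}_v\subseteq F$ as a fraction of elements of $R_0$; applying the principalization to each such fraction produces a common $N$ with $g_1,\dots,g_m\in R_N$, so $B\subseteq R_N\subseteq B_\mathfrak{p}=\mathcal{O}_v$. Dominance gives $\mathfrak{m}_N\cap B=\mathfrak{m}_v\cap B=\mathfrak{p}$, whence every element of $B\setminus\mathfrak{p}$ is a unit in $R_N$ and therefore $B_\mathfrak{p}\subseteq R_N$. Hence $R_N=\mathcal{O}_v$, and the induced isomorphism $\mathscr{O}_{\mathcal{X}_N,x_N}\cong\mathcal{O}_v$ is the one coming from $f_N$.
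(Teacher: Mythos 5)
Your proposal shares its hardest step with the paper and then finishes differently, so let me address both halves.

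The first half of your argument — reducing to quadratic transforms of the local rings $R_n=\mathscr{O}_{\mathcal{X}_n,x_n}$ and proving the ``principalization'' claim that every $a/b\in\mathcal{O}_v$ with $a,b\in R_0$ eventually lies in some $R_n$ — is exactly the step the paper also needs. This statement is equivalent to $\mathcal{O}_v=\bigcup_n R_n$, which is precisely what the paper obtains by invoking the argument of Koll\'ar--Mori, Lemma~2.45. You acknowledge you have only sketched it (``I expect this step to be the main obstacle''), and that is a fair self-assessment: the descent of a numerical invariant under each blow-up, using that the value group is $\mathbb{Z}$, is the genuine content here, and your write-up stops at the level of ``I would follow the classical pattern.'' Your observation that excellence passes to the $R_n$ is correct, but excellence is not actually what drives this step; Noetherianity and the rank-one discrete hypothesis are. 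So as written, the core of the proof is missing, though the plan is sound and the paper covers the same gap by citation.

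The second half is where you genuinely diverge. After $\mathcal{O}_v=\bigcup R_n$, the paper does not assume anything about $\mathcal{O}_v$ being essentially of finite type. Instead it chooses a transcendence basis of $\kappa(v)/k$, pushes it into some $\mathscr{O}_n$, deduces from $\dim Z_n=\dim\mathcal{X}-1$ and $\dim\mathcal{X}_n=\dim\mathcal{X}$ that $\dim\mathscr{O}_n=1$, normalizes (this is where excellence, via the Nagata property, is really used), and pushes the finitely many module generators of the normalization into $\mathscr{O}_N$. You instead import as a black box that a rank~$1$ discrete Abhyankar valuation over an excellent local domain is divisorial, i.e.\ that $\mathcal{O}_v$ is a localization of a finitely generated $R_0$-algebra, and then your formal endgame (push the generators into $R_N$, use dominance to get $B_{\mathfrak p}\subseteq R_N$) is correct. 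The trouble is that this divisoriality statement is not an off-the-shelf elementary fact; the standard proofs of it proceed essentially by the same normalization-plus-dimension-count argument the paper runs, so you are importing a result that is roughly equivalent in difficulty to what you are trying to prove. Your Abhyankar-equality computation via the dimension formula for catenary rings is correct and a nice observation, but by itself it does not deliver essential finite type. In short: your route would close if you either fully carry out the principalization argument \emph{and} supply a reference-level proof of divisoriality; the paper's route is shorter because its dimension count and Nagata finiteness replace the divisoriality input entirely.
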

\begin{proof}
The following proof is an easy adaptation of the proof of the
geometric case, as given in \cite[p.61, Lemma$\;$2.45]{KM}.

Let $\mathscr{O}_n:=\mathscr{O}_{\mathcal{X}_n,\,x_n}$. The ring
theoretic construction of $\mathscr{O}_n$ is as follows. Assume that
$\mathscr{O}_n$ (with maximal ideal $\mathfrak{m}_n$) is already
defined. Pick a system of generators $z_1,\,\dotsc, z_r$ of
$\mathfrak{m}_n$ such that $v(z_1)\le \cdots\le v(z_r)$. Let
$\mathscr{O}'_n=\mathscr{O}_n[z_2/z_1\,,\dotsc, \,z_r/z_1]$. Then
$\mathscr{O}_{n+1}$ is the localization of  $\mathscr{O}'_{n}$ at
$\mathscr{O}'_n\cap\mathfrak{m}_v$.

The same argument as in the proof of \cite[p.61, Lemma$\;$2.45]{KM}
applies here and shows that $\mathcal{O}_v=\bigcup_{n\ge
0}\mathscr{O}_n$. Pick elements $u_1,\dotsc,
u_t\in\mathcal{O}_v\subseteq F$ such that the reductions $\ov{u}_i$
form a transcendence basis of
$\kappa(v)=\mathcal{O}_v/\mathfrak{m}_v$ over $k$. Choose $n$ big
enough so that $u_1,\dotsc, u_t\in\mathscr{O}_n$. Then
$\kappa(v)=\mathcal{O}_v/\mathfrak{m}_v$ is an algebraic extension
of $\kappa(x_n)=\mathscr{O}_n/\mathfrak{m}_n$ and
\[
\mathrm{trdeg}_k\kappa(x_n)=\mathrm{trdeg}_k\kappa(v)=\dim\mathcal{X}-1\,.
\]
The closure $Z_n:=\ov{\set{x_n}}$ of $x_n$ in $\mathcal{X}_n$ is an
algebraic scheme over $k$. So we have
\[
\dim Z_n=\mathrm{trdeg}_k\kappa(x_n)=\dim \mathcal{X}-1\,.
\]
 By \cite[p.334, Coro.$\;$8.2.7]{Liu}, we have $\dim\mathcal{X}_n=\dim\mathcal{X}$. Hence,
\[
\dim\mathscr{O}_n=\mathrm{codim}(Z_n\,,\,\mathcal{X}_n)\le
\dim\mathcal{X}_n-\dim Z_n=1\,.
\]But $\mathscr{O}_n\subseteq\mathcal{O}_v$ and the discrete valuation ring  $\mathcal{O}_v$ is unequal to its fraction field
$F=\mathrm{Frac}(\mathcal{O}_v)=\mathrm{Frac}(\mathscr{O}_n)$, so
$\dim \mathscr{O}_n=1$. Let $R'\subseteq F$ be the normalization of
$\mathscr{O}_n$ and let $\mathfrak{m}'=\mathfrak{m}_v\cap R$. Then
$R'$ is a Dedekind domain and $R'_{\mathfrak{m}'}$ is a discrete
valuation ring contained in $\mathcal{O}_v$ with fraction field $F$.
Therefore, $R'_{\mathfrak{m}'}=\mathcal{O}_v$. The ring
$\mathscr{O}_n$ is a Nagata ring (see e.g. \cite[p.340,
Prop.$\;$8.2.29 and p.343, Thm.$\;$8.2.39]{Liu}). So $R'$ is a
finitely generated $\mathscr{O}_n$-module. Thus we have
$R'\subseteq\mathscr{O}_N$ for some large $N\in\mathbb{N}$. Then it
follows that $\mathcal{O}_v=\mathscr{O}_{N+1}$. The lemma is thus
proved.
\end{proof}

\section{Proof of Theorem$\;\ref{thm1p1}$}

Thm.$\;$\ref{thm1p1} is a statement generalizing
\cite[Thm.$\;$3.1]{CTOP}, where the result is only established under
the hypothesis that $k$ is separably closed or finite. In our proof
the observation that \cite[Prop.$\;$1.14]{CTOP} holds over an
arbitrary field $k$ is the key point which makes it possible to get
rid of this restriction on $k$. In addition, Lemma$\;$\ref{lemma2p1}
will be used in order to obtain the local-global principle for
valuations in the subset $\Omega_R$ instead of the set of all
discrete valuations.

\begin{lemma}\label{lemma3p1}
Let $R$ be a two-dimensional normal excellent henselian local domain
with fraction field $L$, $L'/L$ a finite field extension and $R'$
the integral closure of $R$ in $L'$. Let $w'$ be a discrete
valuation of $L'$ lying over a discrete valuation $w$ of $L$.

If $w'$ corresponds to a codimension $1$ point on a regular proper
model $\mathcal{X}'$ of $R'\,($i.e., $\mathcal{X}'$ is a regular
integral scheme equipped with a proper birational morphism
$\mathcal{X}'\to\Spec R')$, then $w$ corresponds to a codimension
$1$ point on a regular proper model $\mathcal{X}$ of $R$.
\end{lemma}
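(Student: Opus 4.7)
The plan is to first upgrade the hypothesis on $w'$ to the statement that $w$ is ``divisorial'' with respect to $R$, i.e.\ $\mathrm{trdeg}_k\kappa(w)=1$, and then to realize $w$ as a codimension $1$ point on a regular proper birational model of $\Spec R$ obtained by combining Lipman's resolution of singularities with the blow-up construction of Lemma \ref{lemma2p1}.

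For the transcendence degree, note that $\mathcal{X}'$ is $2$-dimensional (being proper birational over $\Spec R'$), so the codimension $1$ point $x'$ has residue field $\kappa(w')=\kappa(x')$ of transcendence degree $1$ over the residue field $k'$ of $R'$. Since $R'/R$ is finite (the excellent local ring $R$ is Nagata), $k'/k$ is finite; and $\kappa(w')/\kappa(w)$ is algebraic because $L'/L$ is finite (clear denominators in an integral equation and reduce modulo $\mathfrak{m}_{w'}$). Hence $\mathrm{trdeg}_k\kappa(w)=\mathrm{trdeg}_k\kappa(w')=1$.

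By Lipman's theorem on resolution of singularities for excellent noetherian reduced $2$-dimensional schemes, fix a regular proper birational morphism $\mathcal{X}_0\to\Spec R$. By the valuative criterion of properness, $w$ has a unique center $x_0\in\mathcal{X}_0$, and since $w$ is nontrivial, $\dim\mathscr{O}_{\mathcal{X}_0,x_0}\in\{1,2\}$. If the dimension equals $1$, then $\mathscr{O}_{\mathcal{X}_0,x_0}$ is a DVR of $L$ dominated by $\mathcal{O}_w$, hence equal to $\mathcal{O}_w$, and we are done. Otherwise $x_0$ is of codimension $2$ in $\mathcal{X}_0$; but $\mathcal{X}_0\to\Spec R$ is proper birational with normal target and therefore an isomorphism over the regular locus of $\Spec R$, in particular above every height $1$ prime. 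Consequently the image of $x_0$ is forced to be the closed point of $\Spec R$, placing $x_0$ in the closed fiber.

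With these hypotheses in place, apply Lemma \ref{lemma2p1} with $A=R$, $\mathcal{X}=\mathcal{X}_0$, and $v=w$: for some $n$, the iterated blow-up $\mathcal{X}_n\to\mathcal{X}_0$ has a center $x_n$ of $w$ with $\mathscr{O}_{\mathcal{X}_n,x_n}\cong\mathcal{O}_w$, so that $x_n$ is a codimension $1$ point of $\mathcal{X}_n$ corresponding to $w$. Each intermediate step is the blow-up along the closure of the current center $x_i$, which is either a codimension $1$ point (in which case the blow-up is trivial) or a closed point of a regular $2$-dimensional scheme; since blowing up a regular closed point on a regular $2$-dimensional scheme again yields a regular $2$-dimensional scheme, all $\mathcal{X}_i$ remain regular, and the composition $\mathcal{X}_n\to\Spec R$ is proper and birational. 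Thus $\mathcal{X}_n$ is the desired regular proper model on which $w$ corresponds to the codimension $1$ point $x_n$. The only real subtlety is this preservation of regularity, which is precisely why we start from the resolution $\mathcal{X}_0$ rather than directly from $\Spec R$.
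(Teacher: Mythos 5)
Your overall strategy mirrors the paper's: dispose of the easy case where $w$ is already realized on a resolution $\mathcal{X}_0$ of $\Spec R$, and in the remaining case invoke Lemma~\ref{lemma2p1} to produce a model where the center of $w$ becomes a divisor. The final paragraph, explaining why the blow-up tower stays regular and proper over $\Spec R$, is a worthwhile detail that the paper leaves implicit. However, your first paragraph contains a genuine error.

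You assert that, because $\mathcal{X}'$ is two-dimensional, any codimension~$1$ point $x'$ of $\mathcal{X}'$ has $\mathrm{trdeg}_{k'}\kappa(x')=1$. This is false: it conflates dimension with transcendence degree of residue fields, which only agree for schemes of finite type over a field. Take $R'=R=\mathbb{C}[\![x,y]\!]$, $\mathcal{X}'=\Spec R'$ (already regular), and $x'=(x)$; then $\kappa(x')=\mathbb{C}(\!(y)\!)$, which has uncountable transcendence degree over $k'=\mathbb{C}$. The correct statement is that $\mathrm{trdeg}_{k'}\kappa(x')=1$ holds \emph{when $x'$ lies in the closed fiber of $\mathcal{X}'\to\Spec R'$} (e.g.\ via the dimension formula, since then $\kappa(x')$ is the function field of an integral curve over $k'$), and this hypothesis is precisely what the paper establishes before computing the transcendence degree. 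In your proof the trdeg is in fact only \emph{used} in the third paragraph, after you have shown $x_0$ lies in the closed fiber of $\mathcal{X}_0$ --- and from there it follows (using that $R'$ is local and finite over $R$) that $x'$ lies in the closed fiber of $\mathcal{X}'$, which is what makes the computation legitimate. So the gap is one of logical ordering and of an unconditionally stated claim that is only conditionally true: you should first run the dichotomy of paragraph two, and only in the codimension-$2$ case deduce that $x'$ is in the closed fiber and then compute $\mathrm{trdeg}_{k}\kappa(w)=1$. With that rearrangement the argument is sound and essentially matches the paper's, which splits on whether the image $p$ of $x'$ in $\Spec R$ is the closed point.
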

\begin{proof}
Let $k$ (resp. $k'$) be the residue field of $R$ (resp. $R'$). Since
$R$ is excellent, $R'$ is finite over $R$ and hence $k'/k$ is a
finite extension. Let $x'\in \mathcal{X}'$ be the center of $w'$ on
$\mathcal{X}'$, $p'$ the canonical image of $x'$ in $\Spec R'$ and
$p$ the canonical image of $p'$ in $\Spec R$.

If $p$ is not the closed point of $\Spec R$, then it has codimension
1 in $\Spec R$ and the valuation ring $\mathcal{O}_w$ of $w$ is
equal to the local ring of $p$ in $\Spec R$, since $R$ is a
2-dimensional normal local domain. Let $V$ be the complement of the
closed point in $\Spec R$. For any regular proper model $\pi:
\mathcal{X}\to \Spec R$, which exists by resolution of
singularities, $\pi^{-1}(V)\to V$ is an isomorphism since $R$ is
normal (cf. \cite[p.150, Coro.$\;$4.4.3]{Liu}). Hence, the point
$x=\pi^{-1}(p)$  has codimension 1 in $\mathcal{X}$ and is the
center of $w$ on $\mathcal{X}$.

Now assume that $p$ is the closed point of $\Spec R$. Then
$x'\in\mathcal{X}'$ lies in the closed fiber of $\mathcal{X}'/R'$
and is the generic point of an integral curve over $k'=\kappa(p')$.
Hence, the residue field $\kappa(w')$ of $w'$ has transcendence
degree 1 over $k'$. Since $k'/k$ and $\kappa(w')/\kappa(w)$ are
finite extensions, this implies that the residue field $\kappa(w)$
has transcendence degree 1 over $k$. By taking any regular proper
model $\mathcal{X}\to \Spec R$ and applying Lemma$\;$\ref{lemma2p1}
to the ring $R$ and the $R$-scheme $\mathcal{X}$, we conclude that
there is a morphism $\mathcal{X}_n\to \mathcal{X}$ obtained by a
sequence of blow-ups such that the center of $w$ on $\mathcal{X}_n$
is a
 point of codimension 1, which completes the proof.
\end{proof}

Given a scheme $Y$, we will denote by
$\Br(Y)=H^2_{\text{\'et}}(Y\,,\,\mathbb{G}_m)$ its cohomological
Brauer group.

\begin{proof}[Proof of Thm.$\;\ref{thm1p1}$]
For any $a\,,\,b\in L^*$, the isotropy of the rank $3$ form $\langle
1\,,\,a\,,\,b\rangle$ is equivalent to the isotropy of the rank $4$
form $\langle 1\,,\,a\,,\,b\,,\,ab\rangle$. So we may restrict to
the case of rank $4$ forms. Let $q$ be a rank $4$ quadratic form
over $L$ which is isotropic over $L_w$ for every $w\in\Omega_R$.
After scaling we may assume without loss of generality that
$q=\langle 1\,,\,a\,,\,b\,,\,abd\rangle$ with $a,\,b\,,\,d\in L^*$.

First assume that $d$ is a square in $L$. Then the quadratic form
$q$ is isomorphic to the norm form of a quaternion algebra, whose
class in the Brauer group $\Br(L)$ will be denoted by $\alpha$. The
form $q$ is isotropic if and only if $\alpha=0$ in the Brauer group.

Take a proper birational morphism $\mathcal{X}\to\Spec R$ with
$\mathcal{X}$ a regular integral scheme such that the closed fiber
$X$ of $\mathcal{X}/R$ is a curve over $k$. For each $w\in\Omega_R$
corresponding to a codimension 1 point of $\mathcal{X}$, the
canonical image $\alpha_w$ of $\alpha$ in $\Br(L_w)$ is trivial
since $q$ is isotropic over $L_w$ by assumption. In particular, the
residue of $\alpha$ at every codimension 1 point of $\mathcal{X}$ is
trivial. Since $\mathcal{X}$ is a regular integral scheme, it
follows that $\alpha\in\Br(L)$ lies in the subgroup
$\Br(\mathcal{X})$. By \cite[Thm.$\;$1.8 (c) and
Lemma$\;$1.6]{CTOP}, we have canonical isomorphisms
$\Br(\mathcal{X})\cong\Br(X)\cong\Br(X_{\red})$. Identify
$\alpha\in\Br(\mathcal{X})$ with its canonical image in
$\Br(X_{\red})$. We will apply \cite[Prop.$\;$1.14]{CTOP} to show
that $\alpha=0$.

Let $f: Z\to X_{\red}$ be the normalization of the reduced curve
$X_{\red}/k$ and let $D\subseteq X_{\red}$ be the closed subscheme
defined by the conductor of $f$. Then \cite[Prop.$\;$1.14]{CTOP}
says that the natural map $\Br(X_{\red})\to\Br(Z)\times\Br(D)$ is
injective. Let $(\alpha_1\,,\,\alpha_2)\in \Br(Z)\times \Br(D)$ be
the image of $\alpha\in\Br(X_{\red})$. Each reduced irreducible
component $T$ of $Z$ is a regular integral curve whose function
field $k(T)$ is the residue field $\kappa(w)$ of a codimension 1
point $w$ of the 2-dimensional regular scheme $\mathcal{X}$. Since
$\alpha$ vanishes in $\Br(L_w)$ by hypothesis, the specialisation of
$\alpha$ in $\Br(\kappa(w))=\Br(k(T))$ is zero. The natural map
$\Br(T)\to \Br(k(T))$ is an injection for the regular scheme $T$, so
the canonical image of $\alpha$ in $\Br(T)$ is zero. Since this
holds for every irreducible component $T$ of $Z$, we have
$\alpha_1=0$ in $\Br(Z)$.

To show that $\alpha_2=0$ in $\Br(D)$, it suffices to prove that
$\alpha_2$ vanishes at each closed point $x$ of $X_{\red}$, by a
0-dimensional variant of \cite[Lemma$\;$1.6]{CTOP}. The point $x$ is
also a closed point of $\mathcal{X}$. We may choose a 1-dimensional
closed integral subscheme $C$ of $\mathcal{X}$ which contains $x$ as
a regular point and let $\omega\in\mathcal{X}$ be the generic point
of $C$. Our hypothesis implies that $\alpha\in\Br(\mathcal{X})$
vanishes at $\omega$, and it follows that there is a regular open
subscheme $U$ of $C$, containing $x$, such that $\alpha|_U=0$ in
$\Br(U)\subseteq \Br(\kappa(\omega))$. Hence,
$\alpha_2(x)=\alpha(x)=0$. We have thus proved that $\alpha=0$ in
$\Br(L)$, whence the isotropy of the rank $4$ quadratic form
$q=\langle 1\,,\,a\,,\,b\,,\,abd\rangle$.

Now suppose that $d$ is not a square in $L$. Let $L'=L(\sqrt{d})$
and $R'$ the integral closure of $R$ in $L'$. Then $R'$ and $L'$
satisfy the same assumptions as $R$ and $L$. Let $w'$ be a discrete
valuation on $L'$ corresponding to a codimension $1$ point of a
regular proper model $\mathcal{X}'/R'$. By Lemma$\;$\ref{lemma3p1},
$w'$ lies over a discrete valuation $w$ in $\Omega_R$. The isotropy
of $q$ over $L_w$ implies the isotropy of $q_{L'}$ over $L'_{w'}$.

Thus the quadratic form $q_{L'}$ over $L'$ has trivial determinant
and is isotropic over $L'_{w'}$ for every $w'\in\Omega_{R'}$, where
the set $\Omega_{R'}$ of discrete valuations of $L'$ is defined in
the same way as $\Omega_R$. By the previous case, $q_{L'}$ is
isotropic over $L'$. By \cite[p.197, Chapt.$\;$VII,
Thm.$\;$3.1]{Lam}, either $q$ is isotropic over $L$ or $q$ contains
a multiple of $\langle 1\,,\,-d\rangle$. In the latter case, since
$\det(q)=d\mod{(L^*)^2}$, $q$ also contains a rank 2 form of
determinant $-1$. Hence $q$ is isotropic over $L$, which completes
the proof.
\end{proof}

\section{Valuations centered on the special fiber}

Most of the present section and the next will be devoted to the
proof of Prop.$\;$\ref{prop1p4}. The lemma below will be used
frequently and referred to as Springer's lemma in what follows.

\begin{lemma}[Springer's lemma, {\cite[p.148,
Prop.$\;$VI.1.9]{Lam}}]\label{lemma4p1} Let $A$ be a complete
discrete valuation ring in which $2$ is invertible. Let $K$ and $k$
be respectively its fraction field and residue field. Let
$\alpha_1,\dotsc, \alpha_r$ and $\beta_1,\dotsc,\beta_s$ be units of
$A$ and let $\ov{\alpha}_i\in k$ and $\ov{\beta}_j\in k$ be their
residue classes. Let $\pi$ be a uniformizer of $A$.

Then the quadratic form $\langle \alpha_1\,,\dotsc,
\alpha_r\rangle\bot\,\pi.\langle \beta_1\,,\dotsc, \beta_s\rangle$
over $K$ is anisotropic if and only if the two residue forms
\[
\langle\ov{\alpha}_1\,,\dotsc, \ov{\alpha}_r\rangle\quad\text{and
}\quad \langle \ov{\beta}_1\,,\dotsc,\ov{\beta}_s\rangle
\]are both anisotropic over $k$.
\end{lemma}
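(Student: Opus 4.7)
The plan is to prove the biconditional by contraposition in both directions, using only Hensel's lemma (the unique place where completeness of $A$ enters) and an elementary scaling argument on the candidate isotropic vector.

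For the ``if'' direction I would assume, without loss of generality, that $\langle \bar{\alpha}_1,\dotsc,\bar{\alpha}_r\rangle$ is isotropic over $k$; so there exist $\bar c_1,\dotsc,\bar c_r\in k$, not all zero, with $\sum_i \bar\alpha_i \bar c_i^{\,2}=0$. Lift each $\bar c_i$ to $c_i\in A$, arranging that some $c_{i_0}$ is a unit. Then $\sum_i \alpha_i c_i^{\,2}\in \pi A$, and I apply Hensel's lemma to
\[
f(X)=\alpha_{i_0}X^2+\sum_{i\neq i_0}\alpha_i c_i^{\,2}
\]
at the approximate root $X=c_{i_0}$. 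The derivative $f'(c_{i_0})=2\alpha_{i_0}c_{i_0}$ is a unit because $2\in A^{*}$ and $c_{i_0}\in A^{*}$, so Hensel produces a true root in $A$, giving a nontrivial zero of $\langle \alpha_1,\dotsc,\alpha_r\rangle$ and hence of the whole form. The case where $\langle\bar\beta_1,\dotsc,\bar\beta_s\rangle$ is isotropic is entirely symmetric: one lifts a zero and applies Hensel to $\pi \langle \beta_1,\dotsc,\beta_s\rangle$ after clearing the factor of $\pi$.

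For the ``only if'' direction, suppose the form $q=\langle\alpha_1,\dotsc,\alpha_r\rangle\bot\,\pi\langle\beta_1,\dotsc,\beta_s\rangle$ has a nontrivial zero $(x_1,\dotsc,x_r,y_1,\dotsc,y_s)\in K^{r+s}$. Clearing denominators, I may assume all $x_i,y_j\in A$, and by dividing the vector through by a suitable power of $\pi$ (which only rescales the quadratic value by an even power of $\pi$, hence preserves the two-block structure of $q$) I may further assume $\min_{i,j}\bigl(v(x_i),v(y_j)\bigr)=0$. The identity $\sum_i \alpha_i x_i^{\,2}=-\pi\sum_j \beta_j y_j^{\,2}$ forces $\sum_i \alpha_i x_i^{\,2}\in\pi A$. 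If some $x_i$ is a unit, reducing mod $\pi$ yields $\sum_i \bar\alpha_i\bar x_i^{\,2}=0$ in $k$ with not all $\bar x_i=0$, so $\langle\bar\alpha\rangle$ is isotropic. Otherwise every $x_i\in\pi A$; writing $x_i=\pi x_i'$ and dividing the original equation by $\pi$ gives $\pi\sum_i \alpha_i (x_i')^2+\sum_j \beta_j y_j^{\,2}=0$, so $\sum_j \beta_j y_j^{\,2}\in \pi A$. The minimality condition on the valuations now forces some $y_j$ to be a unit, and reducing mod $\pi$ shows $\langle\bar\beta\rangle$ is isotropic.

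No serious obstacle is expected: the only conceptual ingredient is the Hensel lifting, which is precisely the reason completeness of $A$ is assumed. The main technical point requiring care is the scaling normalization in the forward direction, where one must verify that multiplying the coordinate vector by a power of $\pi$ preserves the form's splitting into a unit part and a $\pi$-part; this is automatic since such scaling multiplies the quadratic value by an even power of $\pi$ and does not mix the two blocks.
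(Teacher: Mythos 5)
The paper does not prove this lemma; it is quoted as a classical result and cited directly from Lam (Prop.\ VI.1.9), so there is no in-paper argument to compare against. Your proof is correct and is essentially the standard textbook argument for Springer's theorem over a complete discrete valuation ring.

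A few minor remarks. Your labels ``if'' and ``only if'' are swapped relative to the statement as written (the direction that lifts an isotropic residue form via Hensel is the contrapositive of ``only if,'' and the direction that reduces a primitive isotropic vector modulo $\pi$ is the contrapositive of ``if''), but this is cosmetic and the two implications together establish the biconditional. In the first direction, Hensel's lemma does apply as you say: $f(c_{i_0})\equiv 0\pmod{\pi}$ and $f'(c_{i_0})=2\alpha_{i_0}c_{i_0}\in A^{*}$ (using $2\in A^{*}$), and the resulting root is congruent to the unit $c_{i_0}$, hence itself a unit, so the lifted vector is genuinely nontrivial. In the second direction, your normalization $\min_{i,j}(v(x_i),v(y_j))=0$ is the key point, and the case split on whether some $x_i$ is a unit is exactly right; the remark about dividing the vector by a power of $\pi$ ``preserving the two-block structure'' is a non-issue --- you are scaling the isotropic vector, not the form, so the value is multiplied by $\pi^{-2m}$ and remains zero regardless of any block structure. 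The proof is sound.
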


We shall now start the proof of Prop.$\;$\ref{prop1p4}. Recall that
$\Omega_A$ is the union of all $\Omega_{\mathcal{P}}$, where
$\mathcal{P}$ is a regular integral proper flat $A$-scheme with
generic fiber $\mathcal{P}\times_AK\cong\mathbb{P}^1_K$ and
$\Omega_{\mathcal{P}}$ is the set of rank 1 discrete valuations on
$F=K(y)$ that correspond to codimension 1 points of $\mathcal{P}$.
We will fix a discrete valuation $v\in \Omega_A$ and let
$\mathcal{O}_v\subseteq F$ denote the valuation ring of $v$,
$\pi_v\in\mathcal{O}_v$ a uniformizer of $v$,
$\mathfrak{m}_v=\pi_v\mathcal{O}_v$ and $\kappa(v)$ the residue
field of $\mathcal{O}_v$. The $v$-adic completion of
$\mathcal{O}_v\subseteq F$ will be written as
$\wh{\mathcal{O}}_v\subseteq F_v$. If $w$ is a discrete valuation of
$L$, similar notations like $\mathcal{O}_w$, $\mathfrak{m}_w$,
$\kappa(w)$, $\wh{\mathcal{O}}_w\subseteq L_w$ and so on will be
used.

Put $\mathcal{X}=\mathbb{P}^1_{A}$.  Let
$\mathcal{X}_K=\mathbb{P}^1_K$ and $\mathcal{X}_s=\mathbb{P}^1_k$ be
respectively the generic and special fiber of $\mathcal{X}$ over
$A$. Let $\eta\in \mathcal{X}_s=\mathbb{P}^1_k$ denote the generic
point of $\mathcal{X}_s$. The valuation $v\in \Omega_A$ has a unique
center on the model $\mathcal{X}=\mathbb{P}^1_A$, which will be
denoted $P\in\mathcal{X}$. We have the following cases:

(1) $P\in \mathcal{X}_s=\mathbb{P}^1_k$, $P\neq 0,\,\infty\,,\eta$;

(2) $P=\eta\in \mathcal{X}_s=\mathbb{P}^1_k$;

(3) $P=\infty\in\mathcal{X}_s=\mathbb{P}^1_k$ or
$P=\infty\in\mathcal{X}_K=\mathbb{P}^1_K$;

(4) $P=0\in\mathcal{X}_s=\mathbb{P}^1_k$;

(5) $P$ is a closed point of $\mathbb{A}^1_K\subseteq
\mathcal{X}_K=\mathbb{P}^1_K$.

Our proof of Prop.$\;$\ref{prop1p4} will be a case-by-case argument,
which is divided into two parts with details in what follows.

\begin{proof}[Proof of Prop.$\;\ref{prop1p4}\,($Part I$)$]In the first
part of the proof, we treat cases (1)--(4).

\

Case (1). The valuation $v$ is centered at
$P\in\mathcal{X}_s\setminus\set{0\,,\,\infty\,,\,\eta}$.

In this case, we have $v(x)>0$ and $v(y)=0$. We may assume without
loss of generality that for some $0\le r_1\le r$, the numbers $n_i$
in \eqref{eq1p1} satisfy:
\begin{equation}\label{eq4p1new}
n_1=\cdots=n_{r_1}=0\quad \text{ and }\quad
n_{r_1+1}=\cdots=n_r=1\,.
\end{equation}Then $a_1\,,\dotsc, a_{r_1}$
and $a'_{r_1+1}=a_{r_1+1}/x\,,\dotsc, a'_r=a_r/x$ are units for $v$.
Let
\begin{equation}\label{eq4p2new}
q_1=\langle a_1\,,\dotsc,\,a_{r_1} \rangle\quad \text{ and }\quad
q_2=\langle a'_{r_1+1}\,,\dotsc, \, a'_r\rangle\,.
\end{equation}
Then $q=\langle a_1\,,\dotsc, a_r\rangle=q_1\bot\, x.q_2$ is
anisotropic only if $q_1$ and $q_2$ are both anisotropic. By
Springer's lemma (or Hensel's lemma), $q_i$ is anisotropic over
$F_v$ if and only if its residue form
$\ov{q}_i:=q_i\pmod{\mathfrak{m}_v}$ is anisotropic over
$\kappa(v)$. In the present situation, the two residue forms
$\ov{q}_i\,,\,i=1,\,2$ have coefficients in the subfield
$\kappa(P)\subseteq\kappa(v)$. Since $r\ge 5$, either $q_1$ or $q_2$
has rank $\ge 3$. Assume for example $q_1$ has rank $\ge 3$. The
residue field $\kappa(P)$ is a finite extension of $k$, so property
$(*)$ implies that $\ov{q}_1$ is isotropic over $\kappa(P)$ and a
fortiori over $\kappa(v)$. It follows that $q$ is isotropic over
$F_v$ as desired.

\

Case (2). The valuation $v$ is centered at the generic point $\eta$
of the special fiber $\mathcal{X}_s=\mathbb{P}^1_k$.

In this case, $v$ is the $x$-adic valuation on $A[y]$ and
$\kappa(v)=k(y)$. Let $w$ be the $x$-adic valuation on $A[\![y]\!]$,
so that $w|_{A[y]}=v|_{A[y]}$ and $\kappa(w)=k(\!(y)\!)$. Define
$q_1$ and $q_2$ as in \eqref{eq4p2new}. We have
\begin{equation}\label{eq1}
\begin{split}
\ov{q}_1&:=q_1\pmod{\mathfrak{m}_w}=\langle
\lambda_1y^{m_1}\,,\dotsc,
\lambda_{r_1}y^{m_{r_1}}\rangle\,,\\
\ov{q}_2&:=q_2\pmod{\mathfrak{m}_w}=\langle\lambda_{r_1+1}y^{m_{r_1+1}}\,,\dotsc,
\lambda_ry^{m_r}\rangle\,.
\end{split}
\end{equation}
Here we have identified each $\lambda_i\in\Sigma\subseteq A$ with
its canonical image in $k$. By hypothesis and Springer's lemma, we
may assume one of the two residue forms, say $\ov{q}_1$, is
isotropic over $k(\!(y)\!)$. By \eqref{eq1}, $\ov{q}_1$ has
coefficients in $k(y)$ and is isometric to $\mu_1\bot\,y.\mu_2$ over
$k(y)$ for some nonsingular quadratic forms $\mu_i$ over $k$.
Indeed, if $I$ (resp. $J$) denotes the subset of $\set{1\,,\dotsc,
r_1}$ consisting of indices $i$ such that $m_i$ is even (resp. odd),
then we may take $\mu_1=\langle\lambda_i\rangle_{i\in I}$ (resp.
$\mu_2=\langle\lambda_i\rangle_{i\in J}$). Applying Springer's lemma
to the form $\ov{q}_1/k(\!(y)\!)$ with respect to the discrete
valuation ring $k[\![y]\!]$, we conclude that either $\mu_1$ or
$\mu_2$ is isotropic over $k$. Then it is clear that $\ov{q}_1\cong
\mu_1\bot\,y.\mu_2$ is isotropic over $k(y)=\kappa(v)$. Since the
residue forms of $q$ mod $v$ coincide with those mod $w$, it follows
from Springer's lemma that $q$ is isotropic over $F_v$.

\

Case (3). The valuation $v$ is centered at
$P=\infty\in\mathcal{X}_s=\mathbb{P}^1_k$ or $P=\infty\in
\mathcal{X}_K=\mathbb{P}^1_K$.

In this case, we have $v(y)<0$ and $v(x)\ge 0$. Put $z=y^{-1}\in
F=K(y)$. We want to prove that $q$ is isotropic over $F_v$.

Recall that the coefficients of the diagonal form $q$ have the form
$a_i=\lambda_i.x^{n_i}.P_i$, where $\lambda_i\in \Sigma$,
$n_i\in\set{0\,,\,1}$ and $P_i$ is a distinguished polynomial in
$A[y]$ for each $i$. Let $m_i=\deg P_i$ be the degree of $P_i$ with
respect to the variable $y$. Then  in $F=K(y)$ we have
\[
P_i(y)=y^{m_i}(1+z.\rho_i)\,\quad\text{for some }\,\rho_i\in A[z]\,.
\]Set $b_i=\lambda_i.x^{n_i}.y^{m_i}\in F$ and let $q'/F$ be the
diagonal quadratic form $\langle b_1,\dotsc, b_r\rangle$. The two
forms $q=\langle a_i\rangle$ and $q'=\langle b_i\rangle$ are
isometric over $F_v$ since $1+z.\rho_i$ is a square in $F_v$ for
each $i$. So it suffices to prove the isotropy  over $F_v$ of the
form $q'=\langle b_i\rangle$.

We may assume the numbers $n_i$ are given as in \eqref{eq4p1new}, so
that $q'=q'_1\bot x.q'_2$ with
\[
q'_1=\langle \lambda_1y^{m_1}\,,\dotsc,
\lambda_{r_1}y^{m_{r_1}}\rangle\;,\quad q'_2=\langle
\lambda_{r_1+1}y^{m_{r_1+1}}\,,\dotsc, \lambda_ry^{m_r}\rangle\,.
\]There are diagonal quadratic forms $\mu_j\,,\,j=1,\dotsc, 4$, where
$\mu_1\,,\,\mu_2$ have coefficients in $\set{\lambda_1\,,\dotsc,
\lambda_{r_1}}\subseteq \Sigma$ and $\mu_3\,,\,\mu_4$ have
coefficients in $\set{\lambda_{r_1+1}\,,\dotsc, \lambda_r}\subseteq
\Sigma$, such that $q'_1\cong \mu_1\bot y.\mu_2$ and $q'_2\cong
\mu_3\bot y.\mu_4$ over $F=K(y)$. Observe that the two residue forms
of $q$ with respect to the $x$-adic valuation on $F$ are isometric
to the forms $\mu_1\bot y.\mu_2$ and $\mu_3\bot y.\mu_4$. A close
inspection of the above proof for case (2) shows that not all of the
four forms $\mu_j$ are anisotropic over $k$. Since
\[
q'\cong \mu_1\bot y.\mu_2\bot x.(\mu_3\bot y.\mu_4)\;\quad\text{over
}\; F=K(y)\,,
\]it follows easily that $q'$ is isotropic over $F_v$, whence the
isotropy of $q$ over $F_v$.

\

Case (4). The valuation $v$ is centered at the origin
$P=0\in\mathbb{P}^1_k$ of the special fiber.

By the definition of the set $\Omega_A$, the valuation
$v\in\Omega_A$ corresponds to a codimension 1 point $p$ of a regular
proper model $\mathcal{P}/A$ of $\mathbb{P}^1_K$. Since the center
of $v$ on $\mathcal{X}$ lies in the special fiber, $v(x)>0$. The
point $p\in\mathcal{P}$ lies in the special fiber of $\mathcal{P}/A$
since otherwise the valuation $v$ must be trivial on
$K=\mathrm{Frac}(A)$. The residue field $\kappa(v)$ is then the
function field of a curve over $k$. So we have
\[
\mathrm{trdeg}_k\kappa(v)=1=\dim\mathbb{P}^1_A-1\,.
\]

By Lemma$\;$\ref{lemma2p1}, there is a scheme $\mathcal{X}_n\to
\mathcal{X}=\mathbb{P}^1_A$ obtained by a sequence of blow-ups at
closed points lying over $0\in \mathcal{X}_s=\mathbb{P}^1_k$ such
that $\mathcal{O}_v=\mathscr{O}_{\mathcal{X}_n\,,\,x_n}\subseteq F$
for some codimension 1 point $x_n\in \mathcal{X}_n$. If we consider
the same sequence of blow-ups which is carried out on $\Spec
A[\![y]\!]$ this time, then we get a discrete valuation
$w\in\Omega_R$ of $L$ which extends $v$. Now we have inclusions
$A[y]\subseteq \mathcal{O}_v\subseteq \mathcal{O}_w$ and
$\kappa(v)=\kappa(w)$. Let $q_1\,,\,q_2$ be diagonal quadratic forms
with coefficients in $\wh{\mathcal{O}}_v^*$ such that
\[
q\cong q_1\bot\; \pi_v.q_2\quad\text{ over }\;\;F_v\,.
\]Since $q$ is isotropic over $L_w$ by assumption, applying
Springer's lemma to $w$ shows that $\ov{q}_1=q_1\pmod{\mathfrak{m}_v}$
or $\ov{q}_2=q_2\pmod{\mathfrak{m}_v}$ has a nontrivial zero in
$\kappa(w)=\kappa(v)$. One more application of Springer's lemma,
with respect to $v$ this time, proves that $q$ is isotropic over
$F_v$.
\end{proof}

\section{End of the proof}
To prove Prop.$\;$\ref{prop1p4} in case (5), we need the following
form of the Weierstra{\ss} preparation theorem.

\begin{lemma}[Weierstra{\ss}]\label{lemma5p1}
Let $A$ be a complete discrete valuation ring and $A[\![y]\!]$ the
ring of formal power series in one variable over $A$. Let $P\in
A[y]$ be a distinguished polynomial and $f\in A[y]$.

$(\mathrm{i})$ For any $g\in A[\![y]\!]$, there is a unique
expression
\[
g=Q.P+R
\]where $Q\in A[\![y]\!]$ and $R\in A[y]$ is a polynomial of degree $\le \deg
P-1$. In particular,
\[
A[y]/(P)\cong A[\![y]\!]/(P)\,.
\]

$(\mathrm{ii})$ If $f$ divides $P$ in $A[y]$, then there is a unit
$u$ in $A$ such that $uf$ is a distinguished polynomial.
\end{lemma}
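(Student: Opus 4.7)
The plan for (i) is Hensel-style successive approximation modulo powers of a uniformizer $x$ of $A$. Set $m=\deg P$. Since $P$ is distinguished, its reduction $\ov{P}$ in $k[\![y]\!]$ is $y^m$, and division by $y^m$ in $k[\![y]\!]$ is trivial: every power series splits uniquely as $\ov{Q}y^m+\ov{R}$ with $\ov{R}\in k[y]$ of degree $<m$. Starting from $g\in A[\![y]\!]$, I would inductively construct $Q_i\in A[\![y]\!]$ and $R_i\in A[y]$ of degree $<m$ so that
\[
g-\sum_{i=0}^{n-1} x^i(Q_iP+R_i)\in x^nA[\![y]\!]\quad\text{for every }n,
\]
and then set $Q=\sum_{i\ge 0}x^iQ_i$ and $R=\sum_{i\ge 0}x^iR_i$. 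The key technical point is that $A[\![y]\!]$ is $x$-adically separated and complete (because $A$ is), which guarantees that the series for $Q$ converges in $A[\![y]\!]$ and that $R$ remains a polynomial of degree $<m$ in $A[y]$, each of its coefficients being an $x$-adically convergent series in $A$.

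For uniqueness, if $QP+R=0$ with $\deg R<m$, reduction modulo $x$ gives $\ov{Q}y^m=-\ov{R}$ in $k[\![y]\!]$ with $\deg \ov{R}<m$, forcing $\ov{Q}=\ov{R}=0$. Hence $Q=xQ'$ and $R=xR'$ satisfy the same identity, and iterating places $Q,R$ in $\bigcap_n x^nA[\![y]\!]=0$. The isomorphism $A[y]/(P)\cong A[\![y]\!]/(P)$ then follows: surjectivity is the existence part of (i), and for injectivity, any $r\in A[y]$ lying in $P\cdot A[\![y]\!]$ can first be reduced modulo $P$ inside $A[y]$ (using that $P$ is monic) to some $r_0\in A[y]$ of degree $<m$ still lying in $P\cdot A[\![y]\!]$; comparing the representation $r_0=Q_1P+0$ with the trivial one $r_0=0\cdot P+r_0$, uniqueness forces $r_0=0$.

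For (ii), write $P=fg$ with $g\in A[y]$. Since $P$ is monic, the leading coefficients of $f$ and $g$ in $A$ multiply to $1$, so each is a unit; let $u\in A$ be the inverse of the leading coefficient of $f$, so that $uf$ is monic. Reducing modulo $x$ gives $\ov{f}\,\ov{g}=y^m$ in $k[y]$, and unique factorization in $k[y]$ forces $\ov{f}$ to be a scalar multiple of $y^{\deg f}$. That scalar is exactly the image of the leading coefficient of $f$, whose inverse is $\ov{u}$, so $\overline{uf}=y^{\deg f}$; all non-leading coefficients of $uf$ are therefore divisible by $x$, meaning $uf$ is distinguished. The real content of the lemma is the convergence and uniqueness arguments for (i); part (ii) follows by formal manipulation once $P$ is known to admit unique division.
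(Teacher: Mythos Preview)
Your proof is correct and matches the paper's treatment: for (i) the paper simply cites Washington's textbook for the division statement, and your successive-approximation argument is precisely the standard proof one finds there; for (ii) your argument (leading coefficients are units because $P$ is monic, then reduce $P=fg$ modulo $x$ and use unique factorization of $y^m$ in $k[y]$) is essentially identical to the paper's.
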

\begin{proof}
(i) See e.g. \cite[p.114, Prop.$\;$7.4]{Wash}. Note that the
isomorphism $A[y]/(P)\cong A[\![y]\!]/(P)$ implies that $P$ is
irreducible in $A[y]$ if and only if $P$ is irreducible in
$A[\![y]\!]$ and that $P$ divides a polynomial $f$ in $A[y]$ if and
only if $P$ divides $f$ in $A[\![y]\!]$.

(ii) Assume $P=fg$ with $g\in A[y]$. The hypothesis implies
that the coefficient $a_0$ of $y^{\deg f}$ in $f$ is a unit in $A$ since $P$ is a monic polynomial. Let $k$ be the residue field of $A$ and let $A[y]\to k[y]\,,\;F\mapsto \overline{F}$ denote the canonical reduction map.
 By considering the factorization $y^{\deg P}=\overline{P}=\overline{f}\cdot\overline{g}$ in $k[y]$, 
 we see that $u:=a_0^{-1}\in A^*$ has the required property.
\end{proof}

\begin{proof}[Proof of Prop.$\;\ref{prop1p4}\,($Part II$)$]
We now consider the only remaining case, case (5). This is the case
where the center $P$ of the valuation $v$  lies in
$\mathbb{A}^1_K\subseteq \mathcal{X}_K=\mathbb{P}^1_K$.

We have $\mathscr{O}_{\mathcal{X},\,P}=\mathcal{O}_v$ since the two
rings are both discrete valuation rings with fraction field $F$. So
$v$ is defined by an irreducible polynomial $f\in A[y]$ with $x\nmid
f$.

If none of the polynomials $P_i,\,i=1,\dotsc, r$ is divisible by
$f$, then $q$ has coefficients in
$\mathcal{O}_v^*=\mathscr{O}_{\mathcal{X},\,P}^*$. Now the residue
field $\kappa(v)=\kappa(P)$ is a finite extension of $K$ and the
residue form $\ov{q}=q\pmod{\mathfrak{m}_v}$ has rank $r\ge 5$. By
property $(**)$ (cf. Remark$\;$\ref{remark1p3}), $\ov{q}$ is
isotropic over $\kappa(v)$. It follows from Springer's lemma (or
Hensel's lemma) that $q$ is isotropic over $F_v$.

Assume next $f$ divides some $P_i$, say $f\,|\,P_1$. By
Lemma$\;$\ref{lemma5p1}, multiplying $f$ by a unit in $A$ if
necessary, we may assume that $f$ is an irreducible distinguished
polynomial. In $A[\![y]\!]$, $f$ is still an irreducible element.
The $f$-adic valuation on $R=A[\![y]\!]$ determines a discrete
valuation $w\in \Omega_R$ which extends $v\in\Omega_A$. We have
\[
\kappa(v)=\mathrm{Frac}(A[y]/(f))=\mathrm{Frac}(A[\![y]\!]/(f))=\kappa(w)
\]and $F_v\subseteq L_w$. Using the argument with the first and second residue forms and
Springer's lemma, we conclude as in case (4) that $q$ is isotropic
over $F_v$.
\end{proof}

We are now ready to give the proof of Thm.$\;$\ref{thm1p2}.

\begin{proof}[Proof of Thm.$\;\ref{thm1p2}$]
Let $q$ be any quadratic form of rank $r\ge 5$ over
$L=\mathrm{Frac}(R)$ and assume that $q$ is isotropic over $L_w$ for
every $w\in\Omega_R$. Without loss of generality, we may assume
$q=\langle a_1,\dotsc, a_r\rangle$ for some nonzero elements $a_i\in
R=A[\![y]\!]$. By the usual form of the Weierstra{\ss} preparation
theorem (see e.g. \cite[p.115, Thm.$\;$7.3]{Wash}), each $a_i$ may
be written as
\[
a_i=x^{n_i}.P_i.U_i\quad \text{ with }\;n_i\in\mathbb{N}\,,\,U_i\in
R^*\;\text{ and }\; P_i\;\text{ a distinguished polynomial in
}\,A[y]\,.
\]For any power series $f=\sum^{\infty}_{i=0}a_iy^i\in R=A[\![y]\!]$ which is invertible in
$R$, letting $\lambda\in\Sigma$ be the unique element such that
$\lambda^{-1} a_0\equiv 1\pmod{xA}$, we have
\[
\lambda^{-1}f\equiv 1\;\pmod{\mathfrak{m}_R}\,.
\]Since $R$ is complete, it follows that $\lambda^{-1} f$ is a square in
$R$. So after scaling out squares we may assume that the
coefficients $a_i$ have the form described in
Prop.$\;$\ref{prop1p4}. Now the quadratic form $q$ is defined over
$F=K(y)$ and by Prop.$\;$\ref{prop1p4}, it is isotropic over $F_v$
for every $v\in\Omega_A$. The local-global principle with respect to
discrete valuations in $\Omega_A$ is proved for quadratic forms of
rank $\ge 3$ in \cite[Thm.$\;$3.1 and Remark$\;$3.2]{CTPaSu}. Hence,
$q$ is isotropic over $F$ and a fortiori over $L$.
\end{proof}

\begin{remark}\label{remark5p2}
In Thm.$\;$\ref{thm1p2}, assume that $A=k[\![x]\!]$ with $k$ a $C_1$
field of characteristic $\neq 2$ or $A=\mathcal{O}_K$ with $K$ a
$p$-adic number field ($p$ an odd prime). Then every quadratic form
of rank $\ge 9$ is isotropic over $F=K(y)$. In the former case, it
is well-known that $F=k(\!(x)\!)(y)$ is a $C_3$ field. For the case
$A=\mathcal{O}_K$, this statement is firstly proved by Parimala and
Suresh \cite{PaSu}, and then two more recent proofs using different
methods are given in \cite[Coro.$\;$4.15]{HHK} and
\cite[Coro.$\;$3.4]{CTPaSu} as consequences of their main theorems.
Still another proof (including the case $p=2$), which builds upon
the work of Heath-Brown \cite{HB10}, has been announced by Leep
\cite{Le10}.

An easy argument using the Weierstra{\ss} preparation theorem shows
that every quadratic form of rank $\ge 9$ is isotropic over
$L=\mathrm{Frac}(A[\![y]\!])$. So in these cases, the local-global
principle in Thm.$\;$\ref{thm1p2} is only interesting for quadratic
forms of rank $5\le r\le 8$.
\end{remark}

\

\noindent \emph{Acknowledgements.} The author thanks his advisor,
Prof.\! Jean-Louis Colliot-Th\'{e}l\`{e}ne, for many valuable
discussions and comments. Thanks also go to Prof.\! Raman Parimala,
who has read the manuscript carefully and given comments that led
the author to find that the earlier version of
Theorem$\;$\ref{thm1p2} may be generalized to the present one.
The author is grateful to the referee for useful comments.

\end{document}